\newdimen\AAdi%
\newbox\AAbo%
\def\AAk#1#2{\s_etbox\AAbo=\hbox{#2}\AAdi=\wd\AAbo\kern#1\AAdi{}}%
\def\AAr#1#2#3{\s_etbox\AAbo=\hbox{#2}\AAdi=\ht\AAbo\raise#1\AAdi\hbox{#3}}%
\font\tenmsb=msbm10 at 12pt \font\sevenmsb=msbm7 at 8pt
\font\fivemsb=msbm5 at 6pt
\newtheorem{theorem}{Theorem}
\newtheorem{remark}[theorem]{Remark}
\newtheorem{lemma}[theorem]{Lemma}
\numberwithin{equation}{section} \numberwithin{theorem}{section}
\renewcommand{\topmargin}{0cm}
\renewcommand{\oddsidemargin}{5mm}
\renewcommand{\evensidemargin}{5mm}
\renewcommand{\textwidth}{150mm}
\renewcommand{\textheight}{230mm}
\def\R{\mathbb R}
\def\N{\mathbb N}
\def\na{\nabla}
\def\f#1#2{\frac{#1}{#2}}
\def\a{\alpha}
\def\be{\beta}
\def\r{\Re_{I\!V}}
\def\p#1{\partial #1}
\def\de{\delta}
\def\De{\Delta}
\def\e{\eta}
\def\ep{\epsilon}
\def\g{\gamma}
\def\la{\lambda}
\def\lan{\langle}
\def\ran{\rangle}
\def\th{\theta}
\def\Th{\Theta}
\def\si{\sigma}
\def\Si{\Sigma}
\def\r{\rho}
\def\z{\zeta}
\begin{document}

\title[Liouville theorem for minimal graphs]
{Liouville theorem for minimal graphs over manifolds 
of nonnegative Ricci curvature}

\author{Qi Ding}
\address{Shanghai Center for Mathematical Sciences, Fudan University, Shanghai 200438, China}
\email{dingqi@fudan.edu.cn}

\thanks{The author is supported by NSFC 12371053}

\begin{abstract}
Let $\Si$ be a complete Riemannian manifold of nonnegative Ricci curvature. 
We prove a Liouville-type theorem: every smooth solution $u$ to minimal hypersurface equation on $\Si$ is a constant provided $u$ has sublinear growth for its negative part. Here, the sublinear growth condition is sharp.
Our proof relies on a gradient estimate for minimal graphs over $\Si$ with small linear growth of the negative parts of graphic functions via iteration.

\end{abstract}

\maketitle

\section{Introduction}

Let $\Si$ be a complete non-compact Riemannian manifold.
Let $D,\mathrm{div}_\Si$ be the Levi-Civita connection and the divergence operator (in terms of the Riemannian metric of $\Si$), respectively.
In this paper, we study the minimal hypersurface equation on $\Si$
\begin{equation}\label{u0}
\mathrm{div}_\Si\left(\f{Du}{\sqrt{1+|Du|^2}}\right)=0,
\end{equation}
which is a non-linear partial differential equation describing the minimal graph
$$M=\{(x,u(x))\in \Si\times\R|\, x\in\Si\}$$
over $\Si$. 
The smooth solution $u$ to \eqref{u0} is the height function of the minimal graph $M$ in $\Si\times\R$. Therefore, we call $u$ a \emph{minimal graphic function} on $\Si$.

When $\Si$ is Euclidean space $\R^n$, the equation \eqref{u0} has been studied successfully by many mathematicians.
In 1969, Bombieri-De Giorgi-Miranda \cite{BGM} (see also \cite{GT}) proved interior gradient estimates for solutions to the minimal hypersurface equation on $\R^n$, where the 2-dimensional case had already been obtained by Finn \cite{F}.
As a corollary, they immediately got a Liouville type theorem in \cite{BGM} as follows.
\begin{theorem}\label{SLE}
If a minimal graphic function $u$ on $\R^n$ satisfies sublinear growth for its negative part, i.e.,
\begin{equation}\label{sublinear00}
\limsup_{x\to\infty}\f{\max\{-u(x), 0\}}{|x|}=0,
\end{equation}
then $u$ is a constant.
\end{theorem}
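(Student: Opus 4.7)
My plan is to combine the Bombieri--De Giorgi--Miranda (BGM) interior gradient estimate with Moser's Liouville theorem for uniformly elliptic divergence-form operators. The crucial analytic input is the BGM estimate: there exist constants $C_1,C_2$ depending only on $n$ such that, for every $C^2$ solution $u$ of \eqref{u0} on a ball $B_R(x_0)\subset\R^n$,
\[
|Du(x_0)|\;\leq\;C_1\exp\!\left(\frac{C_2\bigl(u(x_0)+\sup_{B_R(x_0)}u^-\bigr)}{R}\right),
\]
obtained by applying the standard non-negative form of BGM to the translated function $u+\sup_{B_R(x_0)}u^-\geq 0$ on $B_R(x_0)$.

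I would fix $x_0\in\R^n$ and let $R\to\infty$. For any fixed $x_0$, the pointwise hypothesis \eqref{sublinear00} upgrades to $\sup_{B_R(x_0)}u^-=o(R)$ as $R\to\infty$ (the shift by $x_0$ is absorbed, since $|x|\leq R+|x_0|$ on $B_R(x_0)$), and $u(x_0)/R\to 0$ trivially. Hence the exponent tends to zero and $|Du(x_0)|\leq C_1$. Since $x_0$ was arbitrary, $u$ has a globally uniform gradient bound on $\R^n$.

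With $|Du|\leq C_1$ globally, the minimal surface operator linearizes to a uniformly elliptic divergence-form operator acting on $u$. Differentiating \eqref{u0} in $x_k$, each partial $v=\partial_k u$ satisfies a homogeneous divergence-form linear equation $\partial_i\bigl(a^{ij}(Du)\partial_j v\bigr)=0$ whose ellipticity constants depend only on $C_1$; since $v$ is itself bounded on $\R^n$, Moser's Liouville theorem for uniformly elliptic operators with bounded measurable coefficients forces $v$ to be constant. Therefore $u$ is affine, $u(x)=a\cdot x+b$, and then $u^-$ grows linearly in $|x|$ unless $a=0$; the sublinear hypothesis \eqref{sublinear00} thus forces $a=0$, so $u\equiv b$.

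The main obstacle is really the BGM gradient estimate itself, whose exponential dependence on $\sup u^-/R$ is essentially sharp; everything after it is routine bookkeeping plus the standard Moser--De Giorgi--Nash theory. Sharpness of \eqref{sublinear00} is exhibited by $u(x)=-x_1$, which solves \eqref{u0}, satisfies $u^-(x)\asymp|x|$ along the positive $x_1$-axis (linear but not sublinear growth), and is non-constant.
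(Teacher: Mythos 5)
Your argument is correct and is essentially the classical proof that the paper itself points to: Theorem \ref{SLE} is quoted from Bombieri--De Giorgi--Miranda as a corollary of their interior gradient estimate, and the surrounding text of the paper describes exactly your chain (one-sided gradient estimate applied to $u+\sup_{B_R}u^-\ge 0$, letting $R\to\infty$ to get a uniform gradient bound, then Moser's Liouville theorem for the differentiated, uniformly elliptic equation to conclude $u$ is affine, with sublinearity of $u^-$ killing the linear part). No gaps; the only thing to be careful about, which you handled, is using the form of the estimate involving only $u(x_0)$ and $\sup_{B_R(x_0)}u^-$ rather than $\sup_{B_R(x_0)}|u|$, since no upper bound on $u$ is assumed.
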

The condition \eqref{sublinear00} is sharp since any affine function is a minimal graphic function on $\R^n$.
When the minimal graphic function $u$ on $\R^n$ has the uniformly bounded gradient, Moser can prove $u$ affine using Harnack's inequalities for uniformly elliptic equations \cite{M}. The gradient estimate of $u$ on $\R^n$ can also be derived by the maximum principle (see \cite{K,W} for instance).
Without the 'uniformly bounded gradient' condition, it is the celebrated Bernstein theorem (see \cite{Fl,DG,Al,Si} and the counter-example in \cite{BDG}). Specially, any  minimal graphic function on $\R^n$ is affine for $n\le7$ by Simons \cite{Si}.

Let us review Liouville type theorems for nonnegative minimal graphic functions on manifolds briefly. 
From \cite{FS} by Fischer-Colbrie and Schoen, any positive minimal graphic function on a Riemann surface $S$ of nonnegative curvature is constant (see Rosenberg \cite{R} for the case of minimal surfaces in $S\times\R$).
In 2013, Rosenberg-Schulze-Spruck proved that every nonnegative minimal graphic function on a complete manifold of nonnegative Ricci curvature and sectional curvature uniformly bounded below, is a constant.
Casteras-Heinonen-Holopainen \cite{CHH} showed that every nonnegative
minimal graphic function $u$ on a complete manifold of asympototically nonnegative sectional curvature is a constant provided $u$ has at most linear growth. In \cite{D0}, the author proved that every nonnegative minimal graphic function on a complete manifold of nonnegative Ricci curvature is constant, which was also obtained by Colombo-Magliaro-Mari-Rigoli \cite{CMMR} independently.
In fact, the 'nonnegative Ricci curvature' condition can be further weakened to volume doubling property and Neumann-Poincar\'e inequality in \cite{D0}.

In some situations, the above 'nonnegative' condition for the solution $u$ on a manifold $\Si$ can be weaken to the condition of 'sub-linear growth for its
negative part', i.e.,
\begin{equation}\label{sublinear}
\limsup_{\Si\ni x\to\infty}\f{\max\{-u(x), 0\}}{d(x,p)}=0
\end{equation}
for some $p\in\Si$, where $d(x,p)$ denotes the distance function on $\Si$ between $x,p$.
Motivated by Theorem \ref{SLE}, for shorthand we say the \emph{strong Liouville theorem for minimal graphs over} $\Si$ if
every minimal graphic function $u$ on $\Si$ is a constant provided $u$ admits sub-linear growth for its negative part.

In \cite{RSS}, Rosenberg-Schulze-Spruck proved the strong Liouville theorem for minimal graphs over complete manifolds of nonnegative sectional curvature. Ding-Jost-Xin \cite{DJX1} proved the strong Liouville theorem for minimal graphs over complete manifolds of nonnegative Ricci curvature, Euclidean volume growth and quadratic curvature decay. 
In \cite{D3}, the author further proved it without the above quadratic curvature decay condition,
which is a bi-product of Poincar\'e inequality on minimal graphs (see \cite{BG} for the Euclidean case). In \cite{CGMR}, Colombo-Gama-Mari-Rigoli proved the strong Liouville theorem for minimal graphs over complete manifolds of nonnegative Ricci curvature and that the $(n-2)$-th Ricci curvature in radial direction from a fixed origin has a lower bound decaying quadratically to zero.

Recently, Colombo-Mari-Rigoli \cite{CMR} proved an interesting theorem that if a minimal graphic function $u$ on a complete non-compact Riemannian manifold $\Si$ of nonnegative Ricci curvature, satisfies 
\begin{equation}\label{dlogd}
\limsup_{\Si\ni x\to\infty}\f{\log d(x,p)}{d(x,p)}\max\{-u(x), 0\}<\infty
\end{equation}
for some $p\in\Si$, then $u$ is a constant.

From now on, we always let $\Si$ denote a complete non-compact Riemannian manifold of nonnegative Ricci curvature (without extra assumptions).
In this paper, we can weaken the condition \eqref{dlogd} to \eqref{sublinear}, and prove the strong Liouville theorem for minimal graphs over $\Si$ as follows.
\begin{theorem}\label{PMGFC0}
Any minimal graphic function $u$ on $\Si$ is a constant provided $u$ has sublinear growth for its negative part.
\end{theorem}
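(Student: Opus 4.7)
The plan is to reduce Theorem \ref{PMGFC0} to a quantitative interior gradient estimate of the form
$$|Du|(q)\ \le\ \Psi\!\left(\sup_{B_{2R}(q)} \f{\max\{-u,0\}}{R}\right)\qquad\text{on every ball }B_{2R}(q)\subset\Si,$$
with a modulus $\Psi(s)\to 0$ as $s\to 0$ that is independent of $R$. Once this estimate is available, fix any $q\in\Si$; the sublinear-growth hypothesis gives, for every $\vep>0$, a constant $C_\vep$ with $\max\{-u,0\}\le\vep\, d(\cdot,p)+C_\vep$ on $\Si$. Replacing $u$ by $u+C_\vep$ (equation \eqref{u0} is translation-invariant) and using the triangle inequality on $B_{2R}(q)$ yields $\sup_{B_{2R}(q)}\max\{-u,0\}/R\le 2\vep+\vep\, d(q,p)/R$. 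Letting $R\to\infty$ and then $\vep\to 0$ forces $|Du|(q)=0$, so $u$ is constant.

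To produce the gradient estimate I would work intrinsically on the minimal graph $M=\{(x,u(x))\}\subset\Si\times\R$. Because $\partial_t$ is parallel in $\Si\times\R$ and $\mathrm{Ric}_\Si\ge 0$, the angle function $v:=\sqrt{1+|Du|^2}$, pulled back to $M$, satisfies a Simons-type inequality of the form $\De_M\log v\ge 0$. Combined with the volume doubling and Neumann--Poincar\'e inequality for minimal graphs established by the author in \cite{D3}, a De Giorgi--Nash--Moser iteration on $M$ yields
$$v(q)\ \le\ \f{C}{|\cT_R(q)|}\int_{\cT_R(q)} v\, d\mu_M, \qquad \cT_R(q):=M\cap\bigl(B_R(q)\times\R\bigr),$$
with $C$ depending only on the dimension. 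Since $d\mu_M=v\, d\mu_\Si$ on $M$, the right-hand side reduces to controlling the graph area $\int_{B_R(q)} v\, d\mu_\Si$ in terms of $|B_{2R}(q)|$.

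The heart of the matter is an area comparison showing that the graph area exceeds the base area by only a small multiple when the negative part of $u$ has small linear growth. I would obtain it through a divergence identity: testing \eqref{u0} against a cut-off built from $u$ itself (typically a truncation of the form $\min\{u-\inf_{B_{2R}} u,\,2R\}$ multiplied by a spatial cut-off) produces an inequality of the shape
$$\int_{B_R(q)} v\, d\mu_\Si\ \le\ \Bigl(1+C\,\sup_{B_{2R}(q)}\tfrac{\max\{-u,0\}}{R}\Bigr)|B_{2R}(q)|,$$
and a nested iteration of this bound on concentric balls refines the Moser output into the modulus $\Psi$ with $\Psi(\vep)\to 0$.

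The main obstacle, and the reason iteration is needed rather than a one-shot estimate, is that the intrinsic doubling and Poincar\'e constants on $M$ depend on $u$ through $v$ itself; a direct Moser iteration could in principle see those constants deteriorate wherever $|Du|$ is already large on intermediate balls. The fix is to run the Moser scheme and the area-improvement step in tandem on a shrinking family of concentric balls, using the smallness of $\sup\max\{-u,0\}/R$ to preserve uniform control of the geometric constants on $M$ at each stage of the bootstrap. This dovetailing is the technical core of the argument and is the precise place where the sublinear condition \eqref{sublinear}, rather than the stronger condition \eqref{dlogd} of \cite{CMR}, can be accommodated.
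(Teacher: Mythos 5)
There is a genuine gap at the very first step: the local gradient estimate you want, $|Du|(q)\le\Psi\bigl(\sup_{B_{2R}(q)}\max\{-u,0\}/R\bigr)$ with $\Psi(s)\to0$ as $s\to0$, is false. Take $\Si=\R^n$ and $u$ affine and positive on $B_{2R}(q)$, e.g.\ $u(x)=2NR+N\lan x-q,e_1\ran$; then the right-hand side is $\Psi(0)=0$ while $|Du|\equiv N$ is arbitrarily large. Every known interior gradient estimate for the minimal hypersurface equation (Bombieri--De Giorgi--Miranda, Korevaar, Wang, \cite{D3}, \cite{CGMR}) requires an upper bound on $u/R$ on the ball, and the whole difficulty of the theorem is that the hypothesis controls only the negative part. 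The missing idea is global: one must first use the Harnack inequality of \cite{D0} for the positive function $u-s$ on the \emph{entire} minimal graph (this is the Appendix of the paper) to convert the sublinear lower bound into a sublinear two-sided bound $|u(x)|\le\be\max\{r_0,d(x,p)\}$ with $\be$ small; only then does any gradient estimate become available. Your scheme never produces an upper bound on $u$, and without one the area comparison you invoke does not close either: both the competitor argument and the divergence identity control the graph area through $\int_{\p B_r(p)}|u|$, i.e.\ through the oscillation of $u$, not through its negative part alone.

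Second, the route through volume doubling and Neumann--Poincar\'e on $M$ is circular in exactly the way you flag at the end, and the ``tandem bootstrap'' is a restatement of the problem rather than a fix: those constants for $M$ are available from \cite{D3} only under extra hypotheses (Euclidean volume growth), or a posteriori once $v$ is already uniformly bounded. The paper's resolution is structurally different: it never runs Moser on $M$ at the outset. It first iterates, in $l$, integral identities for $(\log v)^l v$ over geodesic balls of $\Si$ (Section 3) to obtain $\fint_{B_r(p)}v^{\la+1}<\infty$ for $\la$ up to order $1/\be$, and then performs a modified De Giorgi--Nash--Moser iteration on $\Si$ itself (Lemma \ref{vinftyder}), where the Sobolev inequality holds unconditionally by Croke/Anderson, starting from $\|v\|_{2k}$ with $k>n$ to absorb the loss incurred in passing between integrals on $M$ and on $\Si$. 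Only after $v$ is uniformly bounded does it transfer Sobolev and Poincar\'e to $M$ for the final refinement giving $|Du|_0\le c\be$. Your argument would need to be restructured along these lines.
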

The condition of 'sublinear growth for its negative part', i.e., \eqref{sublinear}, is sharp from the Euclidean case and the manifolds case (see Proposition 9 in \cite{CGMR}).
For getting Theorem \ref{PMGFC0}, we prove a more stronger result: a gradient estimate for small linear growth of the negative part of $u$ (without the upper bound condition of $u$) as follows.
\begin{theorem}\label{SmallG}
There exists a constant $\be_*>0$ depending only on $n$ such that if a minimal graphic function $u$ on $\Si$ satisfies
\begin{equation}\label{-be*0}
\liminf_{x\to\infty}\f{u(x)}{d(x,p)}\ge-\be_*
\end{equation}
for some $p\in\Si$,
then there is a constant $c>0$ depending only on $n$ such that
\begin{equation}\label{GGE}
\sup_{x\in\Si}|Du|(x)\le c\limsup_{x\to\infty}\f{\max\{-u(x), 0\}}{d(x,p)}.
\end{equation}
\end{theorem}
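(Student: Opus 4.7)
\medskip

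\noindent\emph{Proof plan.} The natural setting is the minimal graph $M=\{(x,u(x)):x\in\Si\}\subset\Si\times\R$ equipped with the induced metric, on which the slope $\om=\sqrt{1+|Du|^2}$ is the quantity to bound pointwise (the area element on $M$ equals $\om\,d\mu_\Si$). Two structural facts drive the argument. First, $u$ restricted to $M$ is $\De_M$-harmonic, since $M$ is minimal and the $\R$-factor is totally geodesic in $\Si\times\R$. Second, a Bochner-type computation using minimality of $M$ together with $\mathrm{Ric}_\Si\ge 0$ yields a pointwise differential inequality showing that $\om$ is a nonnegative weakly subharmonic function on $(M,\De_M)$.

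Next I would run a Nash--Moser iteration on $(M,d\mu_M)$, fueled by the Sobolev/Poincar\'e inequality for minimal graphs over nonnegative Ricci manifolds available from \cite{D3}, to convert the subharmonicity of $\om$ into a pointwise mean value inequality
$$
\om\bigl(p,u(p)\bigr)\le C_n\,\mathrm{Vol}_M(B^M_R)^{-1}\int_{B^M_R}\om\,d\mu_M
$$
for intrinsic geodesic balls $B^M_R\subset M$ of arbitrary radius $R>0$. Since $\pi\colon M\to\Si$ is $1$-Lipschitz, $B^M_R\subset\pi^{-1}(B^\Si_R(p))$ and the right-hand integral is bounded by $\int_{B^\Si_R(p)}\om^{2}\,d\mu_\Si$; Bishop--Gromov controls $\mathrm{Vol}_\Si(B^\Si_R)$ uniformly in $R$.

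The growth hypothesis \eqref{-be*0} enters through an integration-by-parts identity of Bombieri--De Giorgi--Miranda type: testing \eqref{u0} against cutoffs involving $u^-=\max\{-u,0\}$ converts $L^1$ or $L^2$ integrals of $\om$ over $B^\Si_R(p)$ into boundary integrals of $u^-$, which by \eqref{-be*0} are of order $\be_* R\cdot\mathrm{Vol}_\Si(\partial B^\Si_R)$ for large $R$. Combining this with the mean value inequality and volume doubling produces an estimate of the schematic form
$$
\om(p,u(p))\le c_1\,\limsup_{x\to\infty}\frac{\max\{-u(x),0\}}{d(x,p)}+c_2\,\be_*\,\om(p,u(p)),
$$
where the circular dependence arises because $\mathrm{Vol}_M(B^M_R)$ and the Sobolev constants on $M$ themselves involve $\om$. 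Choosing $\be_*$ so small that $c_2\be_*\le\tfrac12$ and absorbing the last term into the left-hand side yields \eqref{GGE} with $c=2c_1$.

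The main obstacle is precisely this self-referential coupling between $\om$ and the intrinsic geometry of $M$ used to estimate it. The iteration alluded to in the abstract must be performed on a nested family of intrinsic balls at geometrically scaled radii, each step forced to be a contraction by the smallness of $\be_*$; tracking all constants so that the final $c$ depends only on $n$, and neither on $\be_*$ nor on any a priori gradient bound on $u$, is the most delicate bookkeeping. A secondary obstacle is establishing the subharmonicity of $\om$ and the relevant Sobolev inequality on $M$ without any quantitative curvature bound on $M$ itself.
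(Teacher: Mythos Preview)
Your proposal has a genuine gap at its core: the Nash--Moser iteration you want to run on $(M,d\mu_M)$ requires a Sobolev (or Neumann--Poincar\'e) inequality on $M$ with a constant depending only on $n$. But for a general $\Si$ with nonnegative Ricci curvature, the only way to obtain such an inequality on $M$ is by projecting from $\Si$, and the projection introduces factors of the slope $\om=v$ itself (since $d\mu_M=v\,d\mu_\Si$ and, by \eqref{nafDf}, $|Df|\le v|\na f|$). The Poincar\'e inequality of \cite{D3} that you cite does not apply here: it assumes Euclidean volume growth of $\Si$. Consequently the constants $c_1,c_2$ in your schematic inequality are not dimensional constants but depend on $\sup_\Si v$; the ``absorption by smallness of $\be_*$'' cannot close because the dependence on $\sup v$ is not linear in $\be_*$.

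The paper resolves exactly this circularity, but by a route quite different from yours. It works on $\Si$, not on $M$, throughout the first phase. The key new idea is an iteration in the exponent $l$ of $\fint_{B_r}(\log v)^l v$: a BGM-type integration by parts (testing \eqref{u0} against $u(\log v)^l\xi^{q+1}$) yields a recursion $\fint(\log v)^l v\lesssim \be\, l\,\fint(\log v)^{l-1}v+\text{l.o.t.}$, which after iteration and Taylor summation gives $\fint_{B_r} v^{\la+1}<\infty$ for every $\la<c_n/\be$ (Theorem~\ref{fintvla+1}). This is essential because the modified Moser iteration on $\Si$ (Lemma~\ref{vinftyder}) can only be started from an $L^{2k}$ bound with $k>n$; the paper's own Remark after that lemma explains that the passage from $M$ to $\Si$ incurs a definite loss which prevents starting the iteration from any $L^\g$ with $\g\le 2n$. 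Smallness of $\be$ is what makes $\la+1>2n$ achievable. Only \emph{after} this produces a uniform bound on $v$ does the paper transfer Sobolev and Poincar\'e to $M$ and run your kind of argument to obtain the sharp form \eqref{GGE}. You also omit the preliminary step (Appendix) that uses the Harnack inequality from \cite{D0} to upgrade the one-sided hypothesis \eqref{-be*0} to a two-sided bound $|u|\le\be\,d(\cdot,p)$ for large $d$.
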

The key ingredient in the proof of Theorem \ref{SmallG} is to get an integral estimate of $v^k$ on geodesic balls in $\Si$ for a large constant $k$ by an iteration (on $l$) of an integral of $(\log v)^lv$, where $v$ is the volume function of the minimal graphic function $u$. Then using Sobolev inequality on $\Si$, we can carry out a (modified) De Giorgi-Nash-Moser iteration on geodesic balls in $\Si$ starting from an integral of $v^{2k}$ with $k>n$, and get the bound of $v$ (see Theorem \ref{SmaG} since the Harnack's inequality holds in Theorem 4.3 of \cite{D0}).

Once we get the uniform gradient estimate \eqref{GGE}, from Theorem 8 (or Theorem 6	(ii)) in \cite{CGMR} we can conclude that any tangent cone of $\Si$ at infinity splits off a line isometrically, compared with the harmonic case by Cheeger-Colding-Minicozzi \cite{CCM}.
It's worth to point out that $\Si$ may not split off any line from a counterexample in Proposition 9 of \cite{CGMR}.

Without \eqref{-be*0}, we had the gradient estimates without the 'entire' condition of $M$ or $\Si$, where the estimates depend on the lower bound of the volume of geodesic balls of $\Si$ (see \cite{D3}). In \cite{CGMR}, Colombo-Gama-Mari-Rigoli obtained gradient estimates for minimal graphs over manifolds of nonnegative Ricci curvature and that
the $(n-2)$-th Ricci curvature of $\Si$ in radial direction from a fixed origin has a lower bound decaying quadratically to zero.

\section{Preliminary}

Let $\Si$ be an $n$-dimensional complete Riemannian manifold of nonnegative Ricci curvature.
For any $R>0$ and $p\in\Si$, let $B_{R}(p)$ be the geodesic ball in $\Si$ centered at $p$ with radius $R$.
For each integer $k\ge0$,  
let $\mathcal{H}^k$ denote the $k$-dimensional Hausdorff measure.
From Bishop-Gromov volume comparison theorem,
\begin{equation}\label{VolD}
\f1nr^{1-n}\mathcal{H}^{n-1}(\p B_r(p))\le r^{-n}\mathcal{H}^{n}(B_r(p))\le s^{-n}\mathcal{H}^{n}(B_{s}(p))
\end{equation}
for all $0<s<r$. Let $D$ be the Levi-Civita connection of $\Si$.
From Anderson \cite{An1} or Croke \cite{Cr}, the Sobolev inequality 
\begin{equation}\label{Sob1}
\f{\left(\mathcal{H}^{n}(B_r(p))\right)^{\f1{n}}}r\left(\int_{B_r(p)}|\phi|^{\f n{n-1}}\right)^{\f{n-1}n}\le\Th\int_{B_r(p)}|D\phi|
\end{equation}
holds for any Lipschitz function $\phi$ on $B_r(p)$ with compact support in $B_r(p)$, where $\Th>0$ is a constant depending only $n$.

Let $\Phi$ be a Lipschitz function on $B_{r+s}(p)$, $s\in(0,r]$, and $\z$ be a nonnegative Lipschtz function so that $\z\equiv1$ on $B_r(p)$, $\z\equiv0$ outside $B_{r+s}(p)$ and $|D\z|\le\f1s$. Then from Cauchy-Schwarz inequality, we have
\begin{equation}\aligned
&\int_{B_r(p)}|D(\Phi^2\z)|\le2\int_{B_r(p)}|\Phi|\z|D\Phi|+\int_{B_r(p)}\Phi^2|D\z|\\
\le&r\int_{B_r(p)}|D\Phi|^2\z+\f1r\int_{B_r(p)}\Phi^2\z+\f{1}{s}\int_{B_{r+s}(p)}\Phi^2.
\endaligned\end{equation}
From \eqref{Sob1}, it follows that 
\begin{equation}\label{Sob2}
\left(\mathcal{H}^{n}(B_r(p))\right)^{\f1{n}}\left(\int_{B_r(p)}|\Phi|^{\f {2n}{n-1}}\right)^{\f{n-1}n}\le\Th r^2\int_{B_{r}(p)}|D\Phi|^2+ \f{2\Th r}s\int_{B_{r+s}(p)}\Phi^2.
\end{equation}
From Buser \cite{Bu} or Cheeger-Colding \cite{CC}, there holds the Neumann-Poincar\'e inequality on geodesic balls of $\Si$.
Namely, it holds (up to a choice of $\Th$)
\begin{equation}\label{NPoincare}
\int_{B_r(p)}|\varphi-\varphi_{B_r(p)}|\le \Th r\int_{B_r(p)}|D \varphi|
\end{equation}
for any Lipschitz function $\varphi$ on $B_r(p)$, where
$$\varphi_{B_r(p)}=\fint_{B_r(p)}\varphi:=\f1{\mathcal{H}^n(B_r(p))}\int_{B_r(p)}\varphi.$$

Let $M$ be a minimal graph over $\Si$ with the graphic function $u$ on $\Si$, where $M$ has the induced metric from $\Si\times\R$ equipped with the standard product metric. By Stokes' formula, $M$ is area-minimizing in $\Si\times\R$ analog to an argument as in Euclidean space.
Let $\na$ and $\De$ denote the Levi-Civita connection and Laplacian of $M$, respectively.
We also see $u$ as a function on $M$ by projection $M\to\Si$, i.e., $u(x,u(x))=u(x)$ for any $x\in\Si$.
Then the equation \eqref{u0} is equivalent to that $u$ is harmonic on $M$, i.e.,
\begin{equation}\label{DeMu0}
\De u=0.
\end{equation}
Let 
$$v=\sqrt{1+|Du|^2}$$ be the volume function of $M$ (as mentioned above), and we see $v$ as a function on $M$ by identifying $v(x,u(x))=v(x)$.
Recalling the following Bochner type formula:
\begin{equation}\aligned\label{Dev-1}
\De v^{-1}=-\left(|A|^2+v^{-2}Ric(Du,Du)\right)v^{-1}.
\endaligned
\end{equation}
Here, $A$ denotes the second fundamental form of $M$ in $\Si\times\R$, and $Ric$ denotes the Ricci curvature of $\Si$.
From \eqref{Dev-1}, it follows that
\begin{equation}\aligned\label{Delog v}
\De\log v=|A|^2+v^{-2}Ric(Du,Du)+|\na\log v|^2\ge|\na\log v|^2.
\endaligned
\end{equation}
For a $C^1$-function $f$ on an open set of $\Si$, we can see $f$ as a function on $M$ by letting $f(x,u(x))=f(x)$. Then
\begin{equation}\aligned\label{nafDf}
|\na f|^2=|Df|^2-\f{1}{v^2}|\lan Du,Df\ran|^2\ge|Df|^2-\f{|Du|^2}{v^2}|Df|^2=\f{1}{v^2}|Df|^2.
\endaligned
\end{equation}

$\mathbf{Notional\ convention}.$ 
When we write an integation on a subset of a Riemannian manifold w.r.t. its standard metric of the manifold,
we always omit the volume element for simplicity.

\section{Integral estimates of powers of the volume function}

\begin{lemma}\label{DlogvmxiEST}
Let $\xi$ be a Lipschitz function on $\Si$ with compact support. For all constants $l\ge1$ and $q,\th>0$ we have
\begin{equation}\aligned\label{logm}
\int_\Si|D(\log v)^l|\xi^{q+1}\le l\th r\int_{\Si}(\log v)^{l-1}v|D\xi|^2+\f l{\th r}\int_{\Si}(\log v)^{l-1}v\xi^{2q}.
\endaligned
\end{equation}
\end{lemma}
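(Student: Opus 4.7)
The approach is to push the $\Si$-integral onto the minimal graph $M$ using the identity $dvol_M=v\,dvol_\Si$ and the comparisons in \eqref{nafDf}, exploit the Bochner-type inequality \eqref{Delog v}, and translate back. Writing $|D(\log v)^{l}|=l(\log v)^{l-1}|D\log v|$ and introducing a parameter $\lambda>0$ to be chosen at the end, a weighted Young inequality gives
\begin{equation*}
(\log v)^{l-1}|D\log v|\,\xi^{q+1}\le \f{1}{2\lambda}(\log v)^{l-1}v\,\xi^{2q}+\f{\lambda}{2}(\log v)^{l-1}v^{-1}\xi^{2}|D\log v|^{2}.
\end{equation*}
Applying \eqref{nafDf} in the form $|D\log v|^{2}\le v^{2}|\na\log v|^{2}$ to the second term, the problem reduces to bounding $\int_{M}(\log v)^{l-1}\xi^{2}|\na\log v|^{2}$, the factor of $v$ being absorbed into the change of volume element between $\Si$ and $M$.

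This is where the Bochner inequality $\De\log v\ge|\na\log v|^{2}$ enters. Since $\xi$ is compactly supported on $\Si$, its lift to $M$ is likewise compactly supported, so I would test against the nonnegative function $(\log v)^{l-1}\xi^{2}$ and integrate by parts on $M$:
\begin{equation*}
\int_{M}(\log v)^{l-1}\xi^{2}|\na\log v|^{2}\le \int_{M}(\log v)^{l-1}\xi^{2}\De\log v=-\int_{M}\lan\na\!\left[(\log v)^{l-1}\xi^{2}\right],\na\log v\ran.
\end{equation*}
Expanding the gradient yields two terms: the diagonal piece $(l-1)(\log v)^{l-2}\xi^{2}|\na\log v|^{2}\ge 0$ may be dropped, while the cross term $-2(\log v)^{l-1}\xi\lan\na\xi,\na\log v\ran$ is controlled by a further Young inequality that absorbs half of the left-hand side. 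This produces the clean bound
\begin{equation*}
\int_{M}(\log v)^{l-1}\xi^{2}|\na\log v|^{2}\le 4\int_{M}(\log v)^{l-1}|\na\xi|^{2}\le 4\int_{\Si}(\log v)^{l-1}v\,|D\xi|^{2},
\end{equation*}
using $|\na\xi|\le|D\xi|$, which is again \eqref{nafDf}.

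Plugging this into the opening inequality gives
\begin{equation*}
\int_{\Si}(\log v)^{l-1}|D\log v|\,\xi^{q+1}\le \f{1}{2\lambda}\int_{\Si}(\log v)^{l-1}v\,\xi^{2q}+2\lambda\int_{\Si}(\log v)^{l-1}v\,|D\xi|^{2},
\end{equation*}
and choosing $\lambda=\th r/2$ so that $2\lambda=\th r$ and $\f{1}{2\lambda}=\f{1}{\th r}$, followed by multiplication by $l$, produces exactly \eqref{logm}. The one technicality is that when $1\le l<2$, the factor $(\log v)^{l-2}$ appearing in the integration by parts is singular at critical points of $u$ where $v=1$; I would handle this by replacing $\log v$ with $\log v+\vep$ in the test function and letting $\vep\to 0^{+}$, observing that the nonnegative diagonal term is still dropped. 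I expect the Bochner plus integration by parts step to be the substantive one, everything else being routine Young's inequality bookkeeping.
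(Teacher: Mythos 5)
Your proposal is correct and follows essentially the same route as the paper: the substantive step in both is testing the Bochner inequality $\De\log v\ge|\na\log v|^2$ against $(\log v)^{l-1}\xi^2$ on $M$ and absorbing the cross term to get $\int_M(\log v)^{l-1}\xi^2|\na\log v|^2\le4\int_M(\log v)^{l-1}|\na\xi|^2$, after which a weighted Young inequality with weight $\th r$ and the conversions $|Df|\le v|\na f|$, $|\na f|\le|Df|$, $dvol_M=v\,dvol_\Si$ give \eqref{logm}. The only differences are cosmetic (you apply Young's inequality on $\Si$ before passing to $M$, the paper does it in the reverse order), and your $\vep$-regularization remark addresses a technicality the paper passes over silently.
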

\begin{proof}
We also see $\xi$ as a function on $M$ by letting $\xi(x,u(x))=\xi(x)$. From \eqref{Delog v},
for each $l'\ge0$ from Cauchy-Schwarz inequality we have
\begin{equation}\aligned\label{logmxi}
\int_M(\log v)^{l'}\xi^2|\na\log v|^2
\le&\int_M(\log v)^{l'}\xi^2\De\log v
\le-2\int_M(\log v)^{l'}\xi\na\xi\cdot\na\log v\\
\le&\f12\int_M(\log v)^{l'}\xi^2|\na\log v|^2+2\int_M(\log v)^{l'}|\na\xi|^2,
\endaligned
\end{equation}
which implies
\begin{equation}\aligned\label{logvm'}
\int_M(\log v)^{l'}|\na\log v|^2\xi^2\le4\int_M(\log v)^{l'}|\na\xi|^2.
\endaligned
\end{equation}
From \eqref{nafDf} and \eqref{logvm'}, for all constants $q,\th>0$ and $l\ge1$ we have
\begin{equation}\aligned\label{Dlogmxi}
\int_\Si|D(\log v)^l|\xi^{q+1}\le&\int_M|\na(\log v)^l|\xi^{q+1}=l\int_M(\log v)^{l-1}|\na\log v|\xi^{q+1}\\
\le&\f{l\th r}4\int_M(\log v)^{l-1}|\na\log v|^2\xi^2+\f l{\th r}\int_{M}(\log v)^{l-1}\xi^{2q}\\
\le&l\th r\int_{M}(\log v)^{l-1}|\na\xi|^2+\f l{\th r}\int_{M}(\log v)^{l-1}\xi^{2q}.
\endaligned
\end{equation}
This gives \eqref{logm} by combining with \eqref{nafDf} again.
\end{proof}

Given two constants $\be,r_0>0$, we assume
\begin{equation}\aligned\label{Growu0}
|u(x)|\le\be \max\{r_0,d(x,p)\}\qquad \mathrm{for\ each}\ x\in\Si.
\endaligned
\end{equation} 
For each $r\ge r_0$, it's clear that 
\begin{equation}\aligned\label{Growu}
|u(x)|\le\be \max\{r,d(x,p)\}\qquad \mathrm{for\ each}\ x\in\Si.
\endaligned
\end{equation} 
We fix the point $p$ and denote $\r(x)=d(x,p)$ for each $x\in\Si$.
\begin{lemma}
Given a constant $\th\in(0,1]$ and a constant $0<\de<<1$, for each constant $l\ge1$ we have
\begin{equation}\aligned\label{loglvsmalll}
\fint_{B_{r}(p)}(\log v)^lv\le&(1+\de)\be l\f{(1+\th)^{n+1}}{\th}\fint_{B_{(1+\th)r}(p)}(\log v)^{l-1}v\\
&+\f{2^n(1+c_\de\be)}{\th}\fint_{B_{(1+\th)r}(p)} (\log v)^l,
\endaligned
\end{equation}
where $c_\de\ge1$ is a constant depending only on $n,\de$.
\end{lemma}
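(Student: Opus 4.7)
The plan is to plug the test function $\phi=u(\log v)^l\eta$ into the weak form of the minimal graph equation $\mathrm{div}_\Si(Du/v)=0$, where $\eta=\xi^{q+1}$ is a standard cutoff interpolating from $1$ on $B_r(p)$ to $0$ outside $B_{(1+\th)r}(p)$ with $|D\xi|\le 1/(\th r)$, and $q>0$ is a free parameter to be chosen. Integration by parts on $\Si$ (permissible since the test function is Lipschitz with compact support) yields
$$\int_\Si(\log v)^l\eta\,\f{|Du|^2}{v}=-\int_\Si u(\log v)^l\f{\lan D\eta,Du\ran}{v}-\int_\Si u\eta\,\f{\lan D(\log v)^l,Du\ran}{v}.$$
Since $|Du|^2/v=v-1/v$, the left side equals $\int(\log v)^l\eta\,v-\int(\log v)^l\eta/v$, so rearranging gives
$$\int_{B_r(p)}(\log v)^lv\le\int_\Si(\log v)^l\eta/v+\Big|\int u(\log v)^l\f{\lan D\eta,Du\ran}{v}\Big|+\Big|\int u\eta\f{\lan D(\log v)^l,Du\ran}{v}\Big|.$$

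The first summand is trivially bounded by $\int_{B_{(1+\th)r}}(\log v)^l$ since $v\ge 1$. For the second, the growth hypothesis gives $|u|\le\be(1+\th)r$ on $\mathrm{supp}\,\eta$ (valid because $r\ge r_0$ in \eqref{Growu}); combining this with $|Du|/v\le 1$ and $|D\eta|\le 1/(\th r)$ produces the bound $\f{\be(1+\th)}{\th}\int_{B_{(1+\th)r}}(\log v)^l$. For the third summand, the same inputs give the upper bound $\be(1+\th)r\int\xi^{q+1}|D(\log v)^l|$, and Lemma \ref{DlogvmxiEST} then controls this last integral by a constant multiple of $\f{l}{\th r}\int_{B_{(1+\th)r}}(\log v)^{l-1}v$. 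Finally, dividing everything by $\mathcal{H}^n(B_r(p))$ and applying Bishop--Gromov $\mathcal{H}^n(B_{(1+\th)r})/\mathcal{H}^n(B_r)\le(1+\th)^n\le 2^n$ for $\th\in(0,1]$ converts the integrals into averages on $B_{(1+\th)r}$ with the correct volume prefactors.

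The delicate point, and what I expect to be the main obstacle, is squeezing the leading constant down to the sharp value $(1+\de)\be l$ rather than the crude $2\be l$ one gets from a naive AM--GM-balanced application of Lemma \ref{DlogvmxiEST}. My plan is to exploit the freedom in the two parameters of that lemma: choose the inner Young parameter $\th_1\ne\th$ so that the $l\th_1 r\int(\log v)^{l-1}v|D\xi|^2$ term contributes exactly the sharp $(1+\de)\be l(1+\th)^{n+1}/\th$ piece; the inflated $\f{l}{\th_1 r}\int(\log v)^{l-1}v\xi^{2q}$ term is then tamed by taking $q=q(\de)$ large enough that $\xi^{2q}$ concentrates on a $\de$-neighborhood of $B_r$ and by a pointwise splitting $(\log v)^{l-1}v=(\log v)^{l-1}v\,\mathbf{1}_{\{v\le V_\de\}}+(\log v)^{l-1}v\,\mathbf{1}_{\{v>V_\de\}}$ with $V_\de=e^{1/\de}$. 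On $\{v>V_\de\}$ one has $(\log v)^{l-1}\le\de(\log v)^l$, which can be reabsorbed into the sharp $(1+\de)$ piece, while on $\{v\le V_\de\}$ the integrand is uniformly bounded and contributes the $c_\de\be\fint(\log v)^l$-type correction. Tracking all the constants carefully, combined with the volume comparison $(1+\th)^n\le 2^n$, then produces exactly the inequality \eqref{loglvsmalll}.
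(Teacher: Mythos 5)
Your skeleton --- testing the equation against $u(\log v)^l\xi^{q+1}$, integrating by parts, invoking Lemma \ref{DlogvmxiEST}, and finishing with Bishop--Gromov --- is exactly the paper's, and you have correctly isolated the crux: obtaining the leading constant $(1+\de)\be l$ rather than $2\be l$. But the mechanism you propose for this does not close. With a linear cutoff one has $\th r|D\xi|=1$ throughout the transition annulus, in particular on a neighborhood of $\partial B_r(p)$ where also $\xi^{2q}$ is arbitrarily close to $1$ for any fixed $q$. After multiplying by $(1+\th)\be r$, the pointwise weight on $(\log v)^{l-1}v$ coming from the two terms of Lemma \ref{DlogvmxiEST} is $(1+\th)\be l\left(\f{\th_1}{\th^2}+\f{\xi^{2q}}{\th_1}\right)\f1r\cdot r\ge \f{2(1+\th)\be l}{\th}\xi^{q}$ by AM--GM, which is essentially $\f{2(1+\th)\be l}{\th}$ on a thin shell just outside $\partial B_r(p)$, \emph{for every} choice of $\th_1$ and $q$. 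Since $(\log v)^{l-1}v\ge0$ may a priori concentrate precisely on that shell, the factor $2$ cannot be removed by parameter juggling. Your level-set splitting does not rescue this: on $\{v>V_\de\}$ it produces a term of order $\de\be l\int_{\mathrm{shell}}(\log v)^lv$, which has the same power $(\log v)^lv$ as the left-hand side but lives on the annulus outside $B_r(p)$, so it can be absorbed neither into the left side nor into either right-hand term of \eqref{loglvsmalll} (wrong power of $\log v$ for the first, an $l$-dependent coefficient for the second); carrying such a term forward would also break the iteration in Lemma \ref{Logvlv}. The $\{v\le V_\de\}$ part likewise yields a coefficient $\sim\be\, l\, V_\de$ growing in $l$, whereas the lemma requires $c_\de$ independent of $l$.

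The paper's actual device is a different cutoff, not a different Young parameter: $\xi$ is built from a cosine profile on the inner part of the annulus and a matched linear ramp on the outer part, with $q=q_\de$ chosen so that $\cos^q(\de/4)=\sin(\de/4)/(1-\de/4)$. Then $\th r|D\xi|=\sin\left(\f{\r-r}{\th r}\right)$ vanishes exactly where $\xi$ is close to $1$, and one gets the \emph{pointwise} identity-type bound $\th^2r^2|D\xi|^2+\xi^{2q}\le(1-\de/4)^{-2}<1+\de$ on all of $\Si$ (see \eqref{xiprop}). Feeding this into Lemma \ref{DlogvmxiEST} with the Young parameter equal to $\th$ combines the two terms into a single integral of $(\log v)^{l-1}v$ with total weight $\f{(1+\de)l}{\th r}$, which is what produces the sharp constant. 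You would need to replace your linear cutoff by such a profile (or an equivalent one with $|D\xi|$ small where $\xi\approx1$); as written, your argument only yields the lemma with $2\be l$ in place of $(1+\de)\be l$, which is not strong enough for the application (the threshold $\be_n$ in \eqref{be*} is calibrated to the $(1+\de)$ constant).
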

\begin{proof}
Let $\de$ be a positive constant with $\de<<1$, and $\xi$ be a Lipschitz function on $\Si$ with supp$\xi\subset B_{(1+\th)r}(p)$, $\xi\equiv1$ on $B_{r}(p)$ and 
\begin{equation}
\xi(x)=\left\{\begin{split}
\cos\left(\f{\r(x)-r}{\th r}\right)\qquad\qquad &\mathrm{for}\ x\in B_{(1+\de\th/4)r}(p)\setminus B_r(p)\\
\f{\cos(\de/4)}{1-\de/4}\left(1-\f {\r(x)-r}{\th r}\right)\qquad &\mathrm{for}\ x\in B_{(1+\th)r}(p)\setminus B_{(1+\de\th/4)r}(p)\\
\end{split}\right..
\end{equation}
Then
\begin{equation}\label{xidef}
\th r|D\xi|(x)=\left\{\begin{split}
\sin\left(\f{\r(x)-r}{\th r}\right)\qquad &\mathrm{for}\ x\in B_{(1+\de\th/4)r}(p)\setminus B_r(p)\\
\f{\cos(\de/4)}{1-\de/4}\quad\qquad &\mathrm{for}\ x\in B_{(1+\th)r}(p)\setminus B_{(1+\de\th/4)r}(p)\\
\end{split}\right..
\end{equation}
Let $q=q_\de>1$ so that 
$$\cos^q(\de/4)=\f{\sin(\de/4)}{1-\de/4}.$$
Noting $(1-\de/4)^{-2}<1+\de$ as $0<\de<<1$, with \eqref{xidef} we have
\begin{equation}\aligned\label{xiprop}
\th^2r^2|D\xi|^2+\xi^{2q}\le(1-\de/4)^{-2}<1+\de\qquad \mathrm{on}\ \Si.
\endaligned
\end{equation}

Bombieri-De Giorgi-Miranda \cite{BGM} gave an estimate of an integral of $v\log v$ using \eqref{u0} in the Euclidean case (see also \cite{GT}, and \cite{DJX1} for manifolds). Enlightened by this,
we further estimate an integral of $(\log v)^lv$ on geodesic balls of $\Si$ using \eqref{u0} for each $l>0$.
Integrating by parts, for each $r\ge r_0$ with \eqref{Growu} we have
\begin{equation}\aligned\label{Duvlogve}
0=&\int_\Si\f{Du}{v}\cdot D(u(\log v)^l\xi^{q+1})\\
=&\int_\Si\f{|Du|^2}v(\log v)^l\xi^{q+1}+\int_\Si u\xi^{q+1}\f{Du}{v}\cdot D(\log v)^l+\int_\Si u(\log v)^l\f{Du}{v}\cdot D\xi^{q+1}\\
\ge&\int_\Si\f{|Du|^2}v(\log v)^l\xi^{q+1}-(1+\th)\be r\int_\Si\xi^{q+1}|D(\log v)^l|-\f{c_\de\be}{\th}\int_{B_{(1+\th)r}(p)} (\log v)^l.
\endaligned
\end{equation}
Here, $c_\de\ge1$ is a constant depending only on $n,q=q_\de$.
Then it follows that
\begin{equation}\aligned\label{logvlve0}
\int_\Si(\log v)^lv\xi^{q+1}\le&\int_\Si\left(\f{|Du|^2}v+1\right)(\log v)^l\xi^{q+1}\\
\le&(1+\th)\be r\int_\Si|D(\log v)^l|\xi^{q+1}+\f{1+c_\de\be}{\th}\int_{B_{(1+\th)r}(p)} (\log v)^l.
\endaligned
\end{equation}
For each $l\ge1$, from \eqref{logm} and \eqref{xiprop}, we have
\begin{equation}\aligned\label{Dlogvle}
\int_\Si|D(\log v)^l|\xi^{q+1}\le& \f l{\th r}\int_{B_{(1+\th)r}(p)}(\log v)^{l-1}v\left(\th^2r^2|D\xi|^2+\xi^{2q}\right)\\
\le&\f{(1+\de)l}{\th r}\int_{B_{(1+\th)r}(p)}(\log v)^{l-1}v.
\endaligned
\end{equation}
Substituting \eqref{Dlogvle} into \eqref{logvlve0}, we get
\begin{equation}\aligned
\int_{B_{r}(p)}(\log v)^lv\le(1+\de)\be l\f{(1+\th)}{\th}\int_{B_{(1+\th)r}(p)}(\log v)^{l-1}v+\f{1+c_\de\be}{\th}\int_{B_{(1+\th)r}(p)} (\log v)^l.
\endaligned
\end{equation}
This finishes the proof with \eqref{VolD}.
\end{proof}

Now we further assume $\be\le1$. Denote $\g_\de=(1+\de)n(1+\f1n)^{n+1}$. 
By taking $\th=1/n$ in \eqref{loglvsmalll}, for each $l\ge1$ (up to a choice of $c_\de\ge1$) we have
\begin{equation}\aligned\label{logvlva}
\fint_{B_{r}(p)}(\log v)^lv\le \g_\de\be l\fint_{B_{\f{(n+1)r}n}(p)}(\log v)^{l-1}v+c_\de\fint_{B_{\f{(n+1)r}n}(p)} (\log v)^l.
\endaligned
\end{equation}
Since $M$ is area-minimizing in $\Si\times\R$, with \eqref{VolD} we get
\begin{equation}\aligned\label{volv}
\int_{B_r(p)}v=&\mathcal{H}^n(M\cap(B_r(p)\times\R))\le \mathcal{H}^n(B_r(p))+\int_{\p B_r(p)}|u|\\
\le&\mathcal{H}^n(B_r(p))+\be r\mathcal{H}^n(\p B_r(p))\le (1+n\be)\mathcal{H}^n(B_r(p)).
\endaligned
\end{equation}

Let us iterate the estimate \eqref{logvlva} on $l$.
\begin{lemma}\label{Logvlv}
Let $c_\de$ be the constant in \eqref{logvlva} with the given $0<\de<<1$.
For each integer $j\ge0$ there holds
\begin{equation}\aligned\label{logvlv}
\sup_{r\ge r_0}\fint_{B_{ r}(p)}(\log v)^jv\le j!\g_\de^j\be^j\tbinom{j+m}{m}(1+n\be),
\endaligned
\end{equation}
where $m=[\f{c_\de}{\g_\de\be}]+1\in\N$ depends on $n,\de,\be$, and $\tbinom{j+m}{m}=\f{(m+j)!}{j!m!}$.
\end{lemma}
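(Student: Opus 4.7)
The plan is a direct induction on $j$, iterating the one-step estimate \eqref{logvlva}. The only real obstacle is that the right-hand side of \eqref{logvlva} contains two different quantities, $\fint(\log v)^{l-1}v$ and $\fint(\log v)^l$ (without the $v$-factor), so the first move is to collapse them into a single term of the same type as the left-hand side, so that the recursion can be unrolled cleanly. Since $v=\sqrt{1+|Du|^2}\ge 1$, the pointwise bound $\log v\le v$ gives $(\log v)^l\le(\log v)^{l-1}v$, and inserting this into \eqref{logvlva} yields the clean one-step recursion
\begin{equation*}
\fint_{B_r(p)}(\log v)^l v \le \bigl(\g_\de\be\, l+c_\de\bigr)\fint_{B_{(n+1)r/n}(p)}(\log v)^{l-1}v,\qquad l\ge 1.
\end{equation*}

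Setting $A_j(r):=\fint_{B_r(p)}(\log v)^j v$ and iterating $j$ times along the chain of growing radii $r,\,\f{n+1}{n}r,\,(\f{n+1}{n})^2 r,\dots$ gives
\begin{equation*}
A_j(r)\le \Big(\prod_{l=1}^{j}(\g_\de\be\, l+c_\de)\Big)\,A_0\!\bigl((\tfrac{n+1}{n})^{j} r\bigr).
\end{equation*}
For the base step, the area-minimizing property combined with the growth hypothesis \eqref{Growu0} and Bishop--Gromov, which is exactly \eqref{volv}, furnishes the \emph{uniform} bound $A_0(s)\le 1+n\be$ for every $s\ge r_0$. With the choice $m=[c_\de/(\g_\de\be)]+1$, the elementary inequality $\g_\de\be\,l+c_\de\le\g_\de\be(l+m)$ reduces the product to binomial form:
\begin{equation*}
\prod_{l=1}^{j}(\g_\de\be\, l+c_\de)\le (\g_\de\be)^j\prod_{l=1}^{j}(l+m)=(\g_\de\be)^j\f{(j+m)!}{m!}=j!\,(\g_\de\be)^j\tbinom{j+m}{m},
\end{equation*}
and combining this with $A_0\le 1+n\be$ and taking $\sup_{r\ge r_0}$ yields exactly \eqref{logvlv}.

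The only delicate point is handling the $c_\de\fint(\log v)^l$ term without worsening the $\be$-dependence. The bound $\log v\le v$ achieves this losslessly and is what produces the clean $\be^j$ factor; any cruder substitute (for instance bounding $\fint(\log v)^l$ by the volume growth alone, without reusing the $A_{l-1}$-type quantity on the right) would introduce additive error terms growing faster than $\be^j$ and would destroy the summability in $j$ needed later when the estimate is exponentiated to control $\fint v^k$ for large $k$. The enlargement of the ball radius by $(\f{n+1}{n})^j$ during the iteration is harmless, precisely because the base bound $A_0\le 1+n\be$ is uniform in the radius as long as $s\ge r_0$, which is exactly why hypothesis \eqref{Growu0} was stated with the $\max\{r_0,d(x,p)\}$ cutoff rather than with a fixed scale.
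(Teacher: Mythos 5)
Your proof is correct and follows essentially the same route as the paper: the key steps — using $\log v\le v$ to absorb the $c_\de\fint(\log v)^l$ term into a $(\log v)^{l-1}v$ term, the choice of $m$ so that $\g_\de\be\,l+c_\de\le\g_\de\be(l+m)$, and the base bound $a_0\le1+n\be$ from \eqref{volv} — are identical. The only cosmetic difference is that the paper takes $\sup_{r\ge r_0}$ on both sides of \eqref{logvlva} before iterating, whereas you iterate along the explicit chain of radii $(\tfrac{n+1}{n})^j r$ and invoke the uniformity of the base bound at the end; both are valid.
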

\begin{proof}
Let us prove it by induction. 
From \eqref{logvlva} and $\log v\le v$, for each $j\ge1$ we have
\begin{equation}\aligned\label{suprr0logvlv}
\sup_{r\ge r_0}\fint_{B_{r}(p)}(\log v)^jv\le \g_\de\be j\sup_{r\ge r_0}\fint_{B_r(p)}(\log v)^{j-1}v+c_\de\sup_{r\ge r_0}\fint_{B_{r}(p)}(\log v)^{j-1}v.
\endaligned
\end{equation}
Let $m=[\f{c_\de}{\g_\de\be}]+1\in\N$ depend on $n,\de,\be$, and $\{a_j\}_{j\in\N}$ be a sequence defined by
\begin{equation}\aligned
a_j=\sup_{r\ge r_0}\fint_{B_{r}(p)}(\log v)^{j}v.
\endaligned
\end{equation}
From \eqref{suprr0logvlv}, for each integer $j\ge1$ one has
\begin{equation}\aligned
a_j\le \g_\de\be ja_{j-1}+c_\de a_{j-1}\le \g_\de\be(j+m)a_{j-1}.
\endaligned
\end{equation}
By iteration,
\begin{equation}\aligned\label{allogvlv}
a_j\le \g_\de^j\be^j\f{(j+m)!}{m!}a_0=j!\g_\de^j\be^j\tbinom{j+m}{m}a_0.
\endaligned
\end{equation}
From \eqref{volv}, $a_0\le1+n\be$.
This completes the proof.
\end{proof}

\begin{theorem}\label{fintvla+1}
Let $u$ be a minimal graphic function on $\Si$ satisfying \eqref{Growu} for some constant $\be\in(0,1]$. There is a constant $c(n,\de,\be)>0$ depending only on $n,\de,\be$ so that for each constant $\la\in(0,\f1{\g_\de\be})$ we have
\begin{equation}\aligned\label{vla}
\sup_{r\ge r_0}\fint_{B_{ r}(p)}v^{\la+1}\le c(n,\de,\be)(1-\la \g_\de\be)^{-m-1}.
\endaligned
\end{equation}
\end{theorem}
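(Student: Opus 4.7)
The plan is to derive \eqref{vla} directly from the moment bound in Lemma~\ref{Logvlv} by expanding $v^{\la+1}$ as a power series in $\log v$. Since $v=\sqrt{1+|Du|^2}\ge 1$ pointwise, the identity $v^\la=e^{\la\log v}=\sum_{j=0}^\infty \f{\la^j}{j!}(\log v)^j$ yields the nonnegative series representation
\begin{equation*}
v^{\la+1}=\sum_{j=0}^\infty \f{\la^j}{j!}(\log v)^j\, v.
\end{equation*}
All summands are nonnegative, so the monotone convergence theorem permits termwise integration over $B_r(p)$ and the interchange of $\sum$ with $\fint_{B_r(p)}$.

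Applying Lemma~\ref{Logvlv} to each term, I would get
\begin{equation*}
\sup_{r\ge r_0}\fint_{B_r(p)} v^{\la+1}\le (1+n\be)\sum_{j=0}^\infty (\la\g_\de\be)^j\, \tbinom{j+m}{m}.
\end{equation*}
The hypothesis $\la<1/(\g_\de\be)$ guarantees $x:=\la\g_\de\be\in(0,1)$, so this series converges. Summing via the standard generating-function identity $\sum_{j=0}^\infty \tbinom{j+m}{m} x^j=(1-x)^{-m-1}$ (obtained by differentiating $(1-x)^{-1}=\sum_j x^j$ a total of $m$ times) delivers the desired bound with $c(n,\de,\be)=1+n\be$.

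There is essentially no obstacle beyond checking the convergence of the series, which is exactly what the hypothesis $\la<1/(\g_\de\be)$ is designed to guarantee. I note that the exponent $m+1$ depends on $\be$ (via $m=[c_\de/(\g_\de\be)]+1$) and diverges as $\be\to 0$; this trade-off between the linear growth bound on $|u|$ and the integrability exponent of $v$ is precisely what makes \eqref{vla} the right launching pad for a De Giorgi--Nash--Moser iteration in the proof of Theorem~\ref{SmallG}, where $\be$ will eventually be chosen small enough to secure $2k<\la+1$ for some $k>n$.
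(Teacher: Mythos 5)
Your proof is correct and follows essentially the same route as the paper: expand $v^{\la+1}=\sum_j\f{\la^j}{j!}(\log v)^j v$, integrate termwise, apply Lemma~\ref{Logvlv}, and sum the resulting series $\sum_j\tbinom{j+m}{m}(\la\g_\de\be)^j=(1-\la\g_\de\be)^{-m-1}$ (the paper writes this generating function as $\f1{m!}\f{d^m}{dt^m}\bigl(\f{t^m}{1-t}\bigr)$, which is the same thing). Your explicit appeal to monotone convergence to justify the interchange is a small but welcome extra care.
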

\begin{proof}
Let $\la$ be a positive constant $<\f1{\g_\de\be}$.
From Taylor's expansion
\begin{equation}\aligned
v^\la=e^{\la\log v}=\sum_{j=0}^\infty\f{\la^j}{j!}(\log v)^j,
\endaligned
\end{equation}
combining with \eqref{logvlv}
we get
\begin{equation}\aligned\label{vla}
\fint_{B_{ r}(p)}v^{\la+1}=&\sum_{j=0}^\infty\f{\la^j}{j!}\fint_{B_{ r}(p)}(\log v)^j v\le \sum_{j=0}^\infty\f{\la^j}{j!}j!\g_\de^j\be^j\tbinom{j+m}{m}(1+n\be)\\
=& (1+n\be)\sum_{j=0}^\infty(\la \g_\de\be)^j\tbinom{j+m}{m}.
\endaligned
\end{equation}
From
\begin{equation}\aligned
\sum_{j=0}^\infty \tbinom{j+m}{m}t^j=\f1{m!}\f{d^{m}}{d t^{m}}\sum_{j=0}^\infty t^{j+m}=\f1{m!}\f{d^{m}}{d t^{m}}\left(\f{t^{m}}{1-t}\right)
\endaligned
\end{equation}
for each $t\in(0,1)$, we complete the proof.
\end{proof}

\section{Mean value inequality and gradient estimate}

For each nonnegative measurable function $f$ on $\Si$ and each constant $q>0$, we denote
$$||f||_{q,r}=\left(\fint_{B_r(p)}f^q\right)^{1/q}.$$
Now, let us carry out a (modified) De Giorgi-Nash-Moser iteration to get the mean value inequality for the volume function $v$ with the help of
the Sobolev inequality on $\Si$.
\begin{lemma}\label{vinftyder}
For each constant $k>n$ and $\si\in(0,1)$, there is a constant $c_{\si,k}$ depending only on $n,\si,k$ such that
\begin{equation}\aligned\label{subharmkge2}
||v||_{\infty,\si r}\le c_{\si,k}\left(||v||_{2k,r}\right)^{e^{\f{n}{k-n}}}
\endaligned
\end{equation}
for any $r>0$.
\end{lemma}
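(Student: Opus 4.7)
The plan is a Moser-type iteration on $\Si$ driven by the Sobolev inequality \eqref{Sob2} and a Caccioppoli estimate for powers of $v$ extracted from the subharmonicity \eqref{Delog v} of $\log v$ on $M$. I first derive, for every Lipschitz cutoff $\xi$ on $\Si$ with compact support and every $\alpha\ge 1$, a Caccioppoli bound of the form
\begin{equation*}
\int_\Si\xi^2|Dv^\alpha|^2\le C\int_\Si v^{2\alpha+2}|D\xi|^2,
\end{equation*}
in the spirit of Lemma \ref{DlogvmxiEST}: multiply \eqref{Delog v} by $v^{2\alpha}\xi^2$ (with $\xi$ lifted to $M$), integrate by parts on $M$, and absorb to obtain $\int_M\xi^2|\nabla v^\alpha|^2\le C\int_M v^{2\alpha}|\nabla\xi|^2$; then transfer to $\Si$ using $d\mathrm{vol}_M=v\,d\mathrm{vol}_\Si$ and \eqref{nafDf}. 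The key point is that \eqref{nafDf} costs a factor of $v^{-2}$ on the left, and after reindexing the exponent this is exactly what forces $v^{2\alpha+2}$, rather than the standard $v^{2\alpha}$, on the right.

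Second, I plug $\Phi=v^\alpha$ into \eqref{Sob2} on $B_{r+s}(p)\supset B_r(p)$, control the resulting $|D\Phi|^2$ term by the Caccioppoli (using a cutoff equal to $1$ on $B_r(p)$ and vanishing on $\p B_{r+s}(p)$), use $v\ge 1$ to absorb the lower-order $L^{2\alpha}$ tail into $L^{2\alpha+2}$, and apply Bishop-Gromov \eqref{VolD} to bound the volume ratio $\mathcal{H}^n(B_{r+s}(p))/\mathcal{H}^n(B_r(p))$. The output is a single reverse-H\"older step
\begin{equation*}
\|v\|_{\frac{2\alpha n}{n-1},r}\le\bigl(C(1+r/s)^2\bigr)^{\frac{1}{2\alpha}}\|v\|_{2\alpha+2,r+s}^{\frac{\alpha+1}{\alpha}}.
\end{equation*}

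I then iterate. Set $q_0=2k$ and $q_{j+1}=\frac{n}{n-1}(q_j-2)$; the closed form is $q_j=2n+\bigl(\frac{n}{n-1}\bigr)^j(2k-2n)\to\infty$, which uses $k>n$. Choose a decreasing sequence $r_j\searrow\sigma r$ with $r_j-r_{j+1}$ summing to $(1-\sigma)r$ and apply the reverse-H\"older step with $\alpha_j=(q_j-2)/2$ to get
\begin{equation*}
\|v\|_{q_{j+1},r_{j+1}}\le C_j^{\frac{1}{q_j-2}}\|v\|_{q_j,r_j}^{\frac{q_j}{q_j-2}},
\end{equation*}
with $\log C_j$ at most linear in $j$ and $\log\tfrac{1}{1-\sigma}$. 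Since $\sum_j j/(q_j-2)<\infty$ by the geometric growth of $q_j$, the product of constants converges as $j\to\infty$, giving $\|v\|_{\infty,\sigma r}\le c_{\sigma,k}\|v\|_{2k,r}^{\prod_{j\ge 0}q_j/(q_j-2)}$.

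Finally, using $q_j-2\ge 2(k-n)(n/(n-1))^j$ and $\log(1+x)\le x$,
\begin{equation*}
\log\prod_{j=0}^\infty\frac{q_j}{q_j-2}\le\sum_{j=0}^\infty\frac{2}{q_j-2}\le\frac{1}{k-n}\sum_{j=0}^\infty\Bigl(\frac{n-1}{n}\Bigr)^j=\frac{n}{k-n},
\end{equation*}
and since $v\ge 1$ forces $\|v\|_{2k,r}\ge 1$, replacing the true exponent by $e^{n/(k-n)}$ only weakens the inequality and yields the stated bound. The main obstacle is precisely this exponent inflation: the degenerate ellipticity of \eqref{u0} in the $Du$-direction, encoded as the $v^{-2}$ loss in \eqref{nafDf}, prevents a standard Moser step with improvement exponent equal to $1$, so each step raises a power strictly greater than $1$, and the iteration has to be arranged so that the telescoping product converges explicitly to $e^{n/(k-n)}$.
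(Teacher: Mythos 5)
Your proposal is correct and follows essentially the same route as the paper: a Caccioppoli inequality for powers of $v$ obtained from subharmonicity on $M$ and transferred to $\Si$ at the cost of raising $v^{2\alpha}$ to $v^{2\alpha+2}$, then the Sobolev inequality \eqref{Sob2} and a Moser iteration whose accumulated exponent product is bounded by $e^{n/(k-n)}$ using $v\ge1$. The only (harmless) difference is bookkeeping: you iterate the exact exponents $q_{j+1}=\tfrac{n}{n-1}(q_j-2)$, while the paper first interpolates down to the geometric sequence $2k\a^i$ with $\a=\tfrac{n(k-1)}{(n-1)k}$ via H\"older; both give the same final exponent bound.
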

\begin{proof}
Let $\e$ be a Lipschitz function on $\Si$ with compact support to be defined later. 
Denote $\e(x)=\e(x,u(x))$.
From \eqref{Delog v}, there holds
$\De v\ge0$ on $M$ clearly. For any constant $\ell\ge1$, we have
\begin{equation}\aligned\label{v2le2Dev}
0\ge&-\int_M v^{2\ell}\e^2\De v=2\ell\int_M v^{2\ell-1}\e^2|\na v|^2+2\int_M v^{2\ell}\e\na v\cdot\na\e\\
\ge&2\ell\int_M v^{2\ell-1}\e^2|\na v|^2-\ell\int_M v^{2\ell-1}\e^2|\na v|^2-\f1{\ell}\int_M v^{2\ell+1}|\na\e|^2\\
=&\ell\int_M v^{2\ell-1}\e^2|\na v|^2-\f1{\ell}\int_M v^{2\ell+1}|\na\e|^2.
\endaligned
\end{equation}
From \eqref{nafDf} and \eqref{v2le2Dev}, it infers that
\begin{equation}\aligned\label{naellle2}
\int_\Si\left|Dv^\ell\right|^2\e^2\le&\int_M\left|\na v^\ell\right|^2v\e^2=\ell^2\int_M\left|\na v\right|^2v^{2\ell-1}\e^2\\
\le&\int_M v^{2\ell+1}|\na\e|^2\le\int_\Si v^{2\ell+2}|D\e|^2.
\endaligned
\end{equation}
For each $r\ge\tau>0$, let $\e$ be defined by $\e\equiv1$ on $B_{r+\f\tau2}(p)$, $\e=\f2\tau\left(r+\tau-\r\right)$ on $B_{r+\tau}(p)\setminus B_{r+\f\tau2}(p)$,
$\e\equiv0$ outside $B_{r+\tau}(p)$.
Then $|D\e|\le2/\tau$.
Combining \eqref{Sob2} and \eqref{naellle2}, we have
\begin{equation}\aligned\label{v2lnr}
||v^{2\ell}||_{\f {n}{n-1},r}\le& \Th\left(r^2||D v^{\ell}||_{2,r+\f\tau2}^2+\f{4r}{\tau}||v^{2\ell}||_{1,r+\f\tau2}\right)\\
\le& \Th\left(r^2\int_\Si v^{2\ell+2}|D\e|^2+\f{4r}{\tau}||v^{2\ell}||_{1,r+\tau}\right)\\
\le& \Th\left(\f{4r^2}{\tau^2}||v^{2\ell+2}||_{1,r+\tau}+\f{4r}{\tau}||v^{2\ell}||_{1,r+\tau}\right)\\\le& c \f{r^2}{\tau^2}||v^{2\ell+2}||_{1,r+\tau}=c \f{r^2}{\tau^2}||v||_{2\ell+2,r+\tau}^{2\ell+2}.
\endaligned
\end{equation}
Here, $c=8\Th$ is a constant depending only on $n$.
Given a constant $k>n$, we set \
\begin{equation}\aligned\label{Defa}
\a=\f{n(k-1)}{(n-1)k}>1.
\endaligned
\end{equation}
For $\ell+1\ge k$, we have
\begin{equation}\aligned\label{ite}
\f{2\ell n}{n-1}-(2\ell+2)\a=\f{2n}{(n-1)k}(\ell+1-k)\ge0.
\endaligned
\end{equation}
From H\"older inequality and \eqref{v2lnr}, one has
\begin{equation}\aligned\label{ite}
||v||_{(2\ell+2)\a,r}\le ||v||_{\f{2\ell n}{n-1},r}\le c^{\f1{2\ell}}r^{\f1{\ell}}\tau^{-\f1{\ell}}||v||_{2\ell+2,r+\tau}^{\f{\ell+1}{\ell}}.
\endaligned
\end{equation}

For any $\si\in(0,1)$ and any integer $i\ge-1$, set $m_i=2k\a^i$, $\ell_i=m_i/2-1$, $\tau_i=2^{-(1+i)}(1-\si)r$ and
$r_{i+1}=r_{i}-\tau_{i+1}$ with $r_{-1}=r$.
Then
$$r_{i+1}=r-\sum_{j=0}^{i+1}\tau_j=\si r+\tau_{i+1}\le r,$$
and $\lim_{i\to\infty}r_i=\si r$.
By iterating \eqref{ite}, for each $i\ge0$ we have
\begin{equation}\aligned
||v||_{\a m_{i},r_i}\le c^{\f1{2\ell_i}}r_i^{\f1{\ell_i}}\tau_i^{-\f1{\ell_i}}||v||_{\a m_{i-1},r_{i-1}}^{\f{\ell_i+1}{\ell_i}}.
\endaligned
\end{equation}
Set $\xi_i=\log||v||_{\a m_{i},r_i}$ for each integer $i\ge-1$, and $b_\si=\f{c}{(1-\si)^2}$. Note that $\tau_i/r_i\ge2^{-(1+i)}(1-\si)$, and $\ell_i\ge k\a^i-1\ge (k-1)\a^i$ for every $i\ge0$. Then
\begin{equation}\aligned
\xi_i\le& \f1{2\ell_i}\log c+\f1{\ell_i}\log\f{r_i}{\tau_i}+\f{\ell_i+1}{\ell_i}\xi_{i-1}\le\f1{2\ell_i}\log b_\si+\f{1+i}{\ell_i}\log2+e^{\f1{\ell_i}}\xi_{i-1}\\
\le&\f1{2(k-1)\a^i}\log b_\si+\f{1+i}{(k-1)\a^i}\log2+e^{\f{\a^{-i}}{k-1}}\xi_{i-1}.
\endaligned
\end{equation}
For all $0\le i_0\le i$, it holds
$$\prod_{j=i_0}^i e^{\f{\a^{-j}}{k-1}}=e^{\f1{k-1}\sum_{j=i_0}^i\a^{-j}}\le e^{\f{\a^{1-i_0}}{(k-1)(\a-1)}}.$$
Hence, for each $i\ge1$
\begin{equation}\aligned
\xi_i\le&\f{\log b_\si}{2(k-1)\a^i}+\f{(1+i)\log2}{(k-1)\a^i}+e^{\f{\a^{-i}}{k-1}}\left(\f{\log b_\si}{2(k-1)\a^{i-1}}+\f{i\log2}{(k-1)\a^{i-1}}+e^{\f{\a^{1-i}}{k-1}}\xi_{i-2}\right)\\
\le&\cdots\le\sum_{j=0}^i\left(\f{\log b_\si}{2(k-1)\a^j}+\f{1+j}{(k-1)\a^j}\log2\right)\prod_{\jmath=j+1}^i e^{\f{\a^{-\jmath}}{k-1}}+\xi_{-1}\prod_{j=0}^i e^{\f{\a^{-j}}{k-1}}\\
\le&e^{\f{1}{(k-1)(\a-1)}}\sum_{j=0}^i\left(\f{\log b_\si}{2(k-1)}\f1{\a^j}+\f{\log2}{k-1}\f{1+j}{\a^j}\right)+e^{\f{\a}{(k-1)(\a-1)}}\xi_{-1}.
\endaligned
\end{equation}
Since 
\begin{equation}\aligned
\sum_{j=0}^\infty \f{j+1}{\a^j}=\f{\a^2}{(\a-1)^2},
\endaligned
\end{equation}
we have
\begin{equation}\aligned
\xi_i\le e^{\f{1}{(k-1)(\a-1)}}\left(\f{\log b_\si}{2(k-1)}\f{\a}{\a-1}+\f{\log2}{k-1}\f{\a^2}{(\a-1)^2}\right)+e^{\f{\a}{(k-1)(\a-1)}}\xi_{-1}.
\endaligned
\end{equation}
From $\a-1=\f{k-n}{(n-1)k}$ and $\f\a{\a-1}=\f{n(k-1)}{k-n}$,
we obtain
\begin{equation}\aligned
\xi_i\le e^{\f{n}{k-n}}\left(\f{n\log b_\si}{2(k-n)}+\f{n^2k\log 2}{(k-n)^2}\right)+e^{\f{n}{k-n}}\xi_{-1}.
\endaligned
\end{equation}
Namely,
\begin{equation}\aligned
||v||_{\a m_{i},r_i}\le\exp\left( e^{\f{n}{k-n}}\left(\f{n\log b_\si}{2(k-n)}+\f{n^2k\log 2}{(k-n)^2}\right)\right)\left(||v||_{2k,r}\right)^{e^{\f{n}{k-n}}}.
\endaligned
\end{equation}
Letting $i\rightarrow\infty$, it follows that
\begin{equation}\aligned\label{Phiinftyder}
||v||_{\infty,\si r}\le\exp\left( e^{\f{n}{k-n}}\left(\f{n\log b_\si}{2(k-n)}+\f{n^2k\log 2}{(k-n)^2}\right)\right)\left(||v||_{2k,r}\right)^{e^{\f{n}{k-n}}}.
\endaligned
\end{equation}
This completes the proof.
\end{proof}
\begin{remark}
The factor $e^{\f{n}{k-n}}$ in \eqref{subharmkge2} comes from \eqref{naellle2}, which transforms an estimate on $M$ to another estimate on $\Si$ with a slight but definite 'loss'.
In fact, the factor could be smaller if we choose a larger factor than $\a$ in \eqref{ite} for large $\ell$. However, we can not reduce the constant $k$ to a constant $\le n$ since we need $\a>1$ in \eqref{Defa}. Hence, unlike the classic De Giorgi-Nash-Moser iteration, here we are not able to obtain $\sup_{B_r(p)}v$ bounded by a multiple of an integral of $v^\g$ with $\g\le 2n$ on $B_{2r}(p)$.
\end{remark}

Put
\begin{equation}\label{be*}
\be_n=\f1{n(2n-1)}\left(1+\f1n\right)^{-n-1}.
\end{equation}
For proving Theorem \ref{SmallG}, we only need to show the following theorem since we have the Harnack's inequality in Theorem 4.3 of \cite{D0} (or \eqref{supBRphatuR*} in Appendix directly).
\begin{theorem}\label{SmaG}
If a minimal graphic function $u$ on $\Si$ satisfies
\begin{equation}\label{Guben1}
\limsup_{x\to\infty}\f{|u(x)|}{d(x,p)}<\be_n
\end{equation}
for some $p\in\Si$,
then there is a constant $c>0$ depending only on $n$ such that
\begin{equation}\label{Dubetter}
\sup_{x\in\Si}|Du|(x)\le c\limsup_{x\to\infty}\f{|u(x)|}{d(x,p)}.
\end{equation}
\end{theorem}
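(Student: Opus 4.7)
My plan is to prove Theorem \ref{SmaG} in three stages: first extract a universal $L^\infty$ bound on the gradient quantity $v$ on all of $\Sigma$ from the integral estimates of Section~3, then upgrade the resulting uniformly elliptic regime to the sharp linear dependence on $\beta$ via a localisation argument.

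\textbf{Step 1 (Reduction and admissibility).} Set $\beta_0:=\limsup_{x\to\infty}|u(x)|/d(x,p)<\beta_n$. After a vertical shift of $u$ (which preserves both \eqref{u0} and $Du$), for any $\varepsilon>0$ small, $\beta:=\beta_0+\varepsilon<\beta_n$ and some $r_0$ make \eqref{Growu0} hold. The choice $\beta_n=\frac{1}{n(2n-1)}(1+1/n)^{-n-1}$ is calibrated precisely so that
\[
\gamma_\delta\beta=\frac{(1+\delta)\beta}{(2n-1)\beta_n}<\frac{1}{2n-1}
\]
for $\delta>0$ small; hence one may choose $k>n$ such that $\lambda:=2k-1$ satisfies $\lambda\gamma_\delta\beta<1$, the admissibility required by Theorem \ref{fintvla+1}.

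\textbf{Step 2 (Global $L^\infty$ bound on $v$).} With this admissible $\lambda$, Theorem \ref{fintvla+1} supplies a constant $C_1=C_1(n,\delta,\beta)$ with $\sup_{r\ge r_0}\fint_{B_r(p)}v^{2k}\le C_1$. Feeding this into Lemma \ref{vinftyder} with $\sigma=1/2$ gives $\sup_{B_{r/2}(p)}v\le c_{1/2,k}C_1^{e^{n/(k-n)}/(2k)}$, and letting $r\to\infty$ (using Bishop--Gromov volume doubling to cover $\Sigma$) yields $\sup_\Sigma v\le C_2=C_2(n,\beta)$. Consequently $u$ is globally Lipschitz and \eqref{u0} is now a uniformly elliptic quasilinear equation on $\Sigma$, with ellipticity constants depending only on $n$ and $C_2$.

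\textbf{Step 3 (Linear scaling).} Step 2 alone only gives the non-linear bound $\sup|Du|\le\sqrt{C_2^2-1}$, which does not vanish as $\beta\to0$. To extract the linear bound I would localise: on every $B_R(p)$ with $R\ge r_0$ the growth hypothesis \eqref{Growu} forces $\operatorname{osc}_{B_R(p)}u\le2\beta R$. In the uniformly elliptic regime secured in Step 2, a Caccioppoli/De Giorgi--Nash--Moser interior gradient estimate — built from the Sobolev inequality \eqref{Sob1}, the Neumann--Poincar\'e inequality \eqref{NPoincare}, and the subharmonicity of $v$ on $M$ coming from \eqref{Delog v} — should give
\[
\sup_{B_{R/2}(p)}|Du|\le\frac{C}{R}\operatorname{osc}_{B_R(p)}u\le 2C\beta,
\]
with $C=C(n,C_2)$. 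Letting $R\to\infty$ yields $\sup_\Sigma|Du|\le 2C\beta$, and finally letting $\varepsilon\to0$ replaces $\beta$ by $\beta_0$, proving \eqref{Dubetter}.

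\textbf{Main obstacle.} The technical heart is Step 3: producing the interior gradient estimate with a constant depending \emph{only} on $n$ and $C_2$ (equivalently, only on $n$ and $\beta$), with correct linear scaling in $\operatorname{osc}u$. The natural Moser mean-value inequality applied to the subharmonic $v$ controls $\sup(v-1)$ by an $L^1$-average of $v-1$, which by \eqref{volv} scales linearly in $\beta$ -- but through the relation $|Du|^2=(v-1)(v+1)$ this only yields $|Du|=O(\sqrt\beta)$. Closing the square-root gap to obtain the true linear bound will require either a refined Moser iteration exploiting the divergence structure of \eqref{u0} directly (rather than merely subharmonicity of $v$), or a bootstrap that feeds the intermediate $O(\sqrt\beta)$ gradient bound back into the integral estimates of Section~3. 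This is where the heart of the argument -- and the most careful tracking of $\beta$-dependence -- will live.
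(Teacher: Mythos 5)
Your Steps 1 and 2 reproduce the paper's argument: the same calibration $\g_\de\be_n=\f{1+\de}{2n-1}$, the same choice of an admissible exponent $2k=\la+1>2n$ with $\la\g_\de\be<1$, and the combination of Theorem \ref{fintvla+1} with Lemma \ref{vinftyder} to get $\sup_\Si v\le\psi(n,\be)<\infty$. The genuine gap is Step 3. The interior estimate $\sup_{B_{R/2}(p)}|Du|\le \f{C}{R}\,\mathrm{osc}_{B_R(p)}u$ is precisely the content that has to be proved, and you leave it unproved; moreover, the ingredients you list for it (Sobolev, Neumann--Poincar\'e, subharmonicity of $v$) lead exactly to the $O(\sqrt\be)$ bound you yourself diagnose as insufficient, since the only linear-in-$\be$ integral information they exploit is the area bound \eqref{volv}, i.e. $\fint(v-1)\lesssim\be$.

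The mechanism the paper uses to close this gap is the refinement you gesture at ("exploit the divergence structure directly"), carried out on the graph $M$ rather than on $\Si$, and it has three ingredients your proposal is missing. First, the uniform bound on $v$ from Step 2, together with \eqref{Sob1}, \eqref{NPoincare} and the volume lower bound \eqref{LowVolM}, transfers the Sobolev and Neumann--Poincar\'e inequalities to $M$, so that De Giorgi--Nash--Moser and in particular the weak Harnack inequality hold on $M$. Second, the correct quadratic scaling in $\be$ comes from Caccioppoli applied to the \emph{harmonic} function $u$ on $M$: testing $\De u=0$ against $u\tilde{\e}^2$ and using $|u|\le\be\max\{r,\r\}$ gives $\fint_{B_{r}(p)}|\na u|^2v\le c\,\be^2$; this replaces your $L^1$ bound on $v-1$ and is what removes the square root. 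Third, since $|Du|^2$ is subharmonic on $M$, the function $|Du|_0^2-|Du|^2$ is a nonnegative supersolution, and the weak Harnack inequality on $M$ forces $\lim_{r\to\infty}\fint_{M\cap B_r(\bar p)}|Du|^2=|Du|_0^2$ (this is \eqref{|Du|0equ}); combining this with the Caccioppoli bound and $|\na u|^2=|Du|^2/v^2$ yields $\f{|Du|_0^2}{1+|Du|_0^2}\le c\be^2$, hence \eqref{Dubetter}. Without these steps your argument stops at $|Du|=O(\sqrt\be)$, so the proposal is incomplete at its decisive point.
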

\begin{proof}
From \eqref{Guben1}, there is a constant $\be\in(0,\be_n)$ so that 
\begin{equation}
\limsup_{x\to\infty}\f{|u(x)|}{d(x,p)}<\be.
\end{equation}
Then there is a constant $r_\be>0$ so that 
\begin{equation}\aligned
|u(x)|\le\be \max\{r_\be,d(x,p)\}\qquad \mathrm{for\ each}\ x\in\Si.
\endaligned
\end{equation} 
We fix a positive constant $\de=\de(\be)<<1$ satisfying $\be(1+\de)<\be_n$.
Recalling $\g_\de=(1+\de)n(1+\f1n)^{n+1}$.
From Theorem \ref{fintvla+1}, there is a constant $\la_\be=\left(1+\f{\be_n}{\be(1+\de)}\right)(n-1/2)+1$ so that 
\begin{equation}\aligned
\fint_{B_{ r}(p)}v^{\la_\be}\le\f{c(n,\de,\be)}{(1-(\la_\be-1)\g_\de\be)^{m+1}}=c(n,\de,\be)\left(\f{2\be_n}{\be_n-(1+\de)\be}\right)^{m+1}
\endaligned
\end{equation}
for all $r\ge r_\be$.
From Lemma \ref{vinftyder}, we get
\begin{equation}\aligned\label{supBr2v}
\sup_{B_{r/2}(p)}v=||v||_{\infty,r/2}\le c_{\f12,\f{\la_\be}2}\left(||v||_{\la_\be,r}\right)^{e^{\f{n}{\la_\be/2-n}}}\le\psi(n,\be),
\endaligned
\end{equation}
where $\psi=\psi(n,\be)$ is a positive function depending only on $n$ and $\be<\be_n$ satisfying $\lim_{\be\to\be_n}\psi(n,\be)=\infty$, which may change from line to line.
In other words, we have concluded that $v$ is uniformly bounded on $\Si$. 
In the following, let us give a better bound of $v$ than \eqref{supBr2v}.

Let $\bar{p}=(p,u(p))$, and $B_{r}(\bar{p})$ denote the geodesic ball in $\Si\times\R$ with radius $r$ and centered at $\bar{p}$.
From (3.5) in \cite{D2} and \eqref{VolD}\eqref{volv}, we get
\begin{equation}\aligned\label{LowVolM}
2\mathcal{H}^{n}(B_{r}(p))\ge\mathcal{H}^n(M\cap B_{r}(\bar{p}))\ge \f1r\mathcal{H}^{n+1}(B_{r/2}(\bar{p}))\ge\f1{c}\mathcal{H}^{n}(B_{r}(p))
\endaligned
\end{equation}
for each $r>0$. Here, $c\ge1$ is a constant depending only on $n$, which may change from line to line.
Combining \eqref{Sob1} and \eqref{supBr2v}, (by projection from $\Si\times\R$ into $\Si$) there holds the Sobolev inequality on $M$, i.e.,
\begin{equation}
\left(\fint_{M\cap B_r(\bar{p})}|\phi|^{\f n{n-1}}\right)^{\f{n-1}n}\le \psi r\fint_{M\cap B_r(\bar{p})}|D\phi|
\end{equation}
holds for any Lipschitz function $\phi$ on $M\cap B_r(\bar{p})$ with compact support in $M\cap B_r(\bar{p})$. 
Combining \eqref{NPoincare} and \eqref{supBr2v}, there holds the Neumann-Poincar\'e inequality on exterior geodesic balls of $M$, i.e.,
\begin{equation}
\int_{M\cap B_r(\bar{p})}|\varphi-\bar{\varphi}_{p,r}|\le \psi r\int_{M\cap B_r(\bar{p})}|D \varphi|
\end{equation}
for any Lipschitz function $\varphi$ on $M\cap B_r(\bar{p})$ with 
$\bar{\varphi}_{p,r}=\fint_{M\cap B_r(\bar{p})}\varphi.$
From De Giorgi-Nash-Moser iteration, the mean value inequalities hold on $M$ for sub(super)-harmonic functions on $M$.
Denote $|Du|_0=\sup_\Si|D u|$. Since $|Du|^2$ is subharmonic on $M$ from \eqref{Dev-1}, we conclude that $|Du|_0^2-|Du|^2$ is nonnegative superharmonic on $M$, then (see page 42 in \cite{D3}, or Lemma 3.5 in \cite{DJX1} up to a suitable modification)
\begin{equation}\aligned\label{|Du|0equ}
|Du|_0^2=\sup_\Si|Du|^2=\lim_{r\to\infty}\fint_{M\cap B_{r}(\bar{p})}|D u|^2.
\endaligned
\end{equation}

Let $\tilde{\e}$ be a Lipschitz function on $\Si$ with supp$\tilde{\e}\subset B_{2r}(p)$, $\tilde{\e}\equiv1$ on $B_{r}(p)$ and $|D\tilde{\e}|\le\f1r$.
We also see $\tilde{\e}$ as a function on $M$ by letting $\tilde{\e}(x,u(x))=\tilde{\e}(x)$.
From \eqref{DeMu0} and Cauchy-Schwarz inequality, it follows that
\begin{equation}\aligned
0=&\int_M \na u\cdot\na(u\tilde{\e}^2)=\int_M|\na u|^2\tilde{\e}^2+2\int_Mu\tilde{\e}\na u\cdot\na\tilde{\e}\\
\ge&\int_M|\na u|^2\tilde{\e}^2-\f12\int_M|\na u|^2\tilde{\e}^2-2\int_Mu^2|\na\tilde{\e}|^2.
\endaligned
\end{equation}
Combining this with \eqref{VolD}\eqref{volv}, we get
\begin{equation}\aligned\label{B2rpv-1}
&\int_{B_{r}(p)}|\na u|^2v\le\int_M|\na u|^2\tilde{\e}^2\le 4\int_Mu^2|\na\tilde{\e}|^2\le16\be^2\int_{B_{2r}(p)}v\\
\le& 16(1+n\be)\be^2\mathcal{H}^n(B_{2r}(p))\le16(1+n\be)2^n\be^2\mathcal{H}^n(B_{r}(p)).
\endaligned
\end{equation}
Since $M\cap B_r(\bar{p})\subset B_r(p)\times\R$,
combining with \eqref{nafDf}\eqref{LowVolM}\eqref{|Du|0equ}\eqref{B2rpv-1} we get
\begin{equation}\aligned
&\f{|Du|_0^2}{1+|Du|_0^2}\le\limsup_{r\to\infty}\fint_{M\cap B_{r}(\bar{p})}\f{|D u|^2}{v^2}\le\limsup_{r\to\infty}\fint_{M\cap B_{r}(\bar{p})}|\na u|^2\\
\le& \limsup_{r\to\infty}\f1{\mathcal{H}^n(M\cap B_{r}(\bar{p}))}\int_{B_{r}(p)}|\na u|^2v\le c\limsup_{r\to\infty}\fint_{B_{r}(p)}|\na u|^2v\le c\be^2.
\endaligned
\end{equation}
Letting $\be\to\limsup_{x\to\infty}d^{-1}(x,p)|u(x)|$, we deduce \eqref{Dubetter}, which completes the proof.
\end{proof}

\section{Appendix}

Let $\Si$ be an $n$-dimensional complete Riemannian manifold of nonnegative Ricci curvature.
Let $M$ be a minimal graph over $\Si$ with the graphic function $u$ on $\Si$. Suppose $u$ is not a constant.
For any $r>0$ and $\bar{x}=(x,t_x)\in\Si\times\R$, we define
$$\mathfrak{D}_{\bar{x},r}=\{(y,s)\in\Si\times\R|\, d(y,x)+|s-t_x|<r\},$$
and
$\mathscr{B}_{r}(\bar{x})=M\cap \mathfrak{D}_{\bar{x},r}$.
For each $s\le\inf_{B_{4R}(p)}u$, denote $\bar{p}_s=(p,u(p)-s)$. Since $u-s>0$ on $B_{4R}(p)$ from the maximum principle for \eqref{u0}. From Theorem 4.3 in \cite{D0}, $u-s$ satisfies Harnack's inequality as follows:
\begin{equation}\aligned\label{Harnack}
\sup_{\mathscr{B}_{2R}(\bar{p}_s)}(u-s)\le\vartheta\inf_{\mathscr{B}_{2R}(\bar{p}_s)}(u-s)
\endaligned
\end{equation}
for some constant $\vartheta\ge2$ depending only on $n$.

We suppose that there is a positive constant $\be_*<\f{\be_n}{4(\vartheta-1)}$ with $\be_n$ defined as in \eqref{be*} so that 
\begin{equation}\label{GubenApp}
\liminf_{x\to\infty}\f{u(x)}{d(x,p)}\ge-\be_*
\end{equation}
for some $p\in\Si$. Denote $\ep=\f{\be_n}{8\be_*(\vartheta-1)}-\f12>0$. There is a constant $r_\ep>0$ so that
\begin{equation}\aligned
u(x)\ge-(1+\ep)\be_* \max\{d(x,p),r_\ep\}
\endaligned
\end{equation}
for all $x\in\Si$.
For each $R\ge r_\ep$, let 
$\hat{u}_R=u+4(1+\ep)\be_*R$ and $\hat{p}_R=(p,\hat{u}_R(p))\in\Si\times\R$, then $\hat{u}_R>0$ on $B_{4R}(p)$, which follows that
\begin{equation}\aligned\label{HarhatuR}
\sup_{\mathscr{B}_{2R}(\hat{p}_R)}\hat{u}_R\le\vartheta\inf_{\mathscr{B}_{2R}(\hat{p}_R)}\hat{u}_R\le\vartheta\hat{u}_R(p)
\endaligned
\end{equation}
from \eqref{Harnack}. Since
\begin{equation}\aligned
(\vartheta-1)\hat{u}_R(p)=(\vartheta-1)(u(p)+4(1+\ep)\be_*R)=(\vartheta-1) u(p)+(\be_n+4(\vartheta-1)\be_*)\f R2,
\endaligned
\end{equation}
we get
\begin{equation}\aligned
(\vartheta-1)\hat{u}_R(p)<\be_nR
\endaligned
\end{equation}
for the sufficiently large $R\ge r_\ep$.
Noting $B_R(p)\times(-R+\hat{u}_R(p),R+\hat{u}_R(p))\subset\mathfrak{D}_{\hat{p}_R,2R}$. From \eqref{HarhatuR}, we conclude that
\begin{equation}\aligned\label{supBRphatuR}
\sup_{B_R(p)}\hat{u}_R<\be_nR
\endaligned
\end{equation}
for all the sufficiently large $R\ge r_\ep$. From \eqref{supBRphatuR} and the definition of $\hat{u}_R$, it follows that
\begin{equation}\aligned\label{supBRphatuR*}
\sup_{B_R(p)}u<\sup_{B_R(p)}\hat{u}_R-4\be_*R<(\be_n-4\be_*)R.
\endaligned
\end{equation}

\bibliographystyle{amsplain}

\end{document}